\documentclass[reqno]{amsart}
\usepackage{amssymb,amsmath,amsthm,newlfont,enumerate}

\theoremstyle{plain}
\newtheorem{theorem}{Theorem}[section]
\newtheorem{lemma}[theorem]{Lemma}
\newtheorem{corollary}[theorem]{Corollary}

\theoremstyle{definition}

\theoremstyle{remark}

\newcommand{\CC}{{\mathbb C}}

\newcommand{\cP}{{\mathcal P}}

\begin{document}

\title[Lower bound for capacity]{A least-squares
lower bound for capacity}

\author[T. Ransford]{Thomas Ransford}

\address{%
D\'epartement de math\'ematiques et de statistique, Universit\'e Laval, Qu\'ebec (QC), G1V 0A6, Canada}

\email{ransford@mat.ulaval.ca}

\thanks{Research supported by  NSERC Discovery Grant RGPIN-2026-04565}

\subjclass{Primary 31A15, Secondary 30E10}

\keywords{Logarithmic capacity, minimal carrier capacity}

\date{17 October 2026}

\begin{abstract}
We establish a least-squares lower bound for the logarithmic
capacity of an arbitrary compact plane set, 
extending an inequality previously known for connected sets.
The bound  generalizes classical inequalities 
involving area and moment of inertia. It is sharp for disks, 
and asymptotically sharp for compact sets whose capacity
coincides with that of their interior. We further show that
asymptotic sharpness holds for general compact sets if
we replace logarithmic capacity by minimal carrier capacity.
\end{abstract}

\maketitle

\section{Introduction}

Logarithmic capacity measures the size of a planar set from the
point of view of two-dimensional potential theory. It also plays a key role in complex analysis and approximation theory.
For background information 
see, for example, \cite[Chapter~5]{Ra95}.

There are several equivalent ways of defining  capacity.
For the purposes of this article, it is convenient to adopt
a characterization in terms of norms of polynomials.
Fix the following notation.
If $K$ is a non-empty compact subset of $\CC$
and $f:K\to\CC$ is a continuous function, then we
write
\[
\|f\|_{L^\infty(K)}:=\max_{z\in K}|f(z)|.
\]
Let $\cP_n$ be the set of monic complex polynomials
of degree~$n$.
For $n\ge1$, define
\[
m_n(K)=\inf_{p\in\cP_n}\|p\|_{L^\infty(K)}.
\]
Then the logarithmic capacity of $K$ can be characterized
by the relations
\begin{equation}\label{E:Chebyshev}
c(K)=\lim_{n\to\infty}m_n(K)^{1/n}=\inf_{n\ge1}m_n(K)^{1/n}.
\end{equation}
See, for example, \cite[Corollary~5.5.5]{Ra95}.

Our goal in this article is to establish least-squares lower bounds
for $c(K)$ using the norm
\[
\|f\|_{L^2(K)}:=\Bigl(\int_K|f(z)|^2\,dA(z)\Bigr)^{1/2},
\]
where  $dA=dx\,dy$ is planar area measure.
This is potentially of interest,
among other reasons, because
least-squares minima are easy to calculate.
An analysis of the sharpness of these inequalities will
eventually lead us to a dual version of \eqref{E:Chebyshev}.

The first result is our basic least-squares lower bound for capacity.

\begin{theorem}\label{T:bound}
For every compact plane set $K$ and every integer $n\ge1$,
\begin{equation}\label{E:bound}
c(K)^n\ge \frac{1}{\sqrt{\pi n}}\inf_{p\in\cP_n}\|p'\|_{L^2(K)}.
\end{equation}
\end{theorem}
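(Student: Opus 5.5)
The plan is to reduce \eqref{E:bound} to the case where $K$ is a polynomial lemniscate, and then exploit an exact area formula for polynomial lemniscates. The basic observation is this. If $q$ is a polynomial of degree $N$ and $E=\{z:|q(z)|\le R\}$, then $q$ maps the interior of $E$ onto the disk $\{|w|<R\}$ as an $N$-to-$1$ branched covering. Hence, by the change-of-variables formula,
\[
\int_E|q'|^2\,dA=N\pi R^2 .
\]
Two consequences follow:
\begin{itemize}
\item If $K\subset\{|p|\le \|p\|_{L^\infty(K)}\}$ for a monic $p$ of degree $n$, this already gives $\inf_{p\in\cP_n}\|p'\|^2_{L^2(K)}\le \pi n\, m_n(K)^2$.
\item For $E=\{|q|\le R\}$ with $q$ monic of degree $N$ we have $c(E)=R^{1/N}$, so $N\pi R^2=\pi N c(E)^{2N}$. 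Thus \eqref{E:bound} holds with equality-type control for lemniscates when $n=N$.
\end{itemize}
The first bound is the wrong one, since $m_n(K)\ge c(K)^n$. The work is therefore to replace $m_n(K)$ by $c(K)^n$.

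The first reduction uses two facts. The quantity $\inf_{p\in\cP_n}\|p'\|_{L^2(K)}$ is increasing in $K$. Capacity is continuous along decreasing sequences of compact sets. By \eqref{E:Chebyshev}, choose monic $q_j$ of degree $N_j$ with $\|q_j\|_{L^\infty(K)}^{1/N_j}\to c(K)$, and set $E_j=\{|q_j|\le \|q_j\|_{L^\infty(K)}\}\supset K$. Then $c(E_j)\to c(K)$. It therefore suffices to prove \eqref{E:bound} for lemniscates $E=\{|q|\le R\}$ of arbitrary degree $N$, with $c(E)^n=R^{n/N}$. Replacing $q$ by a power $q^k$ does not change $E$ (with $R$ replaced by $R^k$), so we may assume $n$ divides $N$.

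For such $E$, I would take $p$ to be the polynomial part at $\infty$ of the branch of $h=q^{n/N}$ with $h(z)\sim z^n$. This $p$ is monic of degree $n$, and $h=p+O(1/z)$ near $\infty$. By Green's formula, $\int_E|p'|^2\,dA=\frac1{2i}\oint_{\partial E}\bar p\,dp$. The aim is to compare this with the same expression for $h$. On $\partial E$ we have $|h|=R^{n/N}$. Morally, $h$ is an $n$-to-$1$ map of $\partial E$ onto the circle of radius $R^{n/N}$, which gives $\frac1{2i}\oint\bar h\,dh=\pi n R^{2n/N}$. The correction $h-p=O(1/z)$ should contribute non-positively, in the same way as in the Grunsky/area theorem, where removing negative-power Laurent terms decreases area. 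The expected outcome is $\int_E|p'|^2\le \pi n\,c(E)^{2n}$.

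The main obstacle is the last step when $E$ is disconnected. Then $h=q^{n/N}$ need not be single-valued on the unbounded complement near every component of $\partial E$, and the area-theorem argument must be carried out in $\CC\setminus E$ using only the branch at $\infty$. I would first try to handle this by working with the Green function $g$ of $\CC\setminus E$, which equals $\frac1N\log|q/R|$. On level sets $\{g=\epsilon\}$ for small $\epsilon>0$, the harmonic conjugate has total period $2\pi$. The idea is then to write the boundary integral via $\partial g/\partial\nu$ and apply Cauchy--Schwarz, or an orthogonality argument, to the Laurent tail.
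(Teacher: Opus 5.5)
\textbf{There is a genuine gap in the disconnected case, and the candidate polynomial you chose for it actually violates the bound.}

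\textbf{What works.} Your reduction to lemniscates $E=\{|q|\le R\}\supset K$ with $R^{1/N}\to c(K)$ is sound. When $E$ is connected, your area-theorem argument also works. In that case $h=q^{n/N}$ is the $n$th power of the exterior conformal map and $p$ is the $n$th Faber polynomial. One then gets $\int_E|p'|^2\,dA=\pi n\, c(E)^{2n}-\int_{\CC\setminus E}|(h-p)'|^2\,dA$. But this is exactly the connected case already obtained by P\'olya--Szeg\H o's method in \cite{Ra94}.

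\textbf{Why the disconnected case cannot be avoided.} The new content of the theorem is the disconnected case, and your reduction cannot dodge it. Two small, far-apart disks have small capacity, while any connected set containing them has capacity bounded below.

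\textbf{Why your plan fails there.} The polynomial part of $q^{n/N}$ need not satisfy the bound at all. So no Cauchy--Schwarz or orthogonality estimate on the Laurent tail can rescue the argument. Take $q(z)=z^3(z-1)$, $n=2$, $N=4$, and $E=\{|q|\le R\}$ with $R$ small.
\begin{itemize}
\item The target is $\int_E|p'|^2\,dA\le 2\pi c(E)^4=2\pi R$.
\item Here $h=z^2(1-1/z)^{1/2}=z^2-\tfrac12 z-\tfrac18+O(1/z)$, so $p'(z)=2z-\tfrac12$.
\item The component of $E$ around $0$ contains the disk $|z|\le (R/2)^{1/3}$, on which $|p'|\ge \tfrac12-2(R/2)^{1/3}$.
\item Hence $\int_E|p'|^2\,dA$ is at least a positive constant times $R^{2/3}$, which exceeds $2\pi R$ for small $R$.
\end{itemize}
By contrast, $p=z^2$ gives $O(R^{4/3})$, consistent with the theorem. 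The obstruction is monodromy: $h$ changes sign around each component of $E$. So the cross terms $\oint\bar h\,d(h-p)$ in your area computation are not even well defined.

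\textbf{The missing idea.} You need to avoid the multivalued $h$ altogether. The paper does this by passing to a dual problem rather than exhibiting a good $p$.
\begin{itemize}
\item Write $\lambda=\inf_{q\in\cP_{n-1}}\int_K|q|^2\,dA=\int_K|q_0|^2\,dA$, and let $F$ be the Cauchy transform of a mollification of $1_K\overline{q_0}$.
\item $F$ is single-valued and holomorphic on $\Omega=\{|P|>r^d\}$. By the orthogonality relations, $F(z)=(\lambda/\pi)z^{-n}+O(|z|^{-n-1})$.
\item The identity $\int|F_z|^2\,dA=\int|F_{\bar z}|^2\,dA$ gives the upper bound $\int_\Omega|F'|^2\,dA\le\lambda$.
\item The lower bound $\int_\Omega|F'|^2\,dA\ge n\lambda^2/(\pi r^{2n})$ comes from Green's identities applied to $\rho=|F|^2e^{2ng}$, where $g$ is the Green function of $\Omega$.
\item $\rho$ is single-valued and subharmonic. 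It is precisely the branch-free substitute for your $|Fh|^2/r^{2n}$.
\end{itemize}
Comparing the two bounds yields $\lambda\le \pi r^{2n}/n$.
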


When $n=1$, the inequality \eqref{E:bound} reduces to a 
bound for capacity in terms of the area $A(K)$ of $K$, namely
\begin{equation}\label{E:A(K)}
c(K)\ge \sqrt{\frac{A(K)}{\pi}}.
\end{equation}
This result is  well known, see for example
\cite[Theorem~5.2.5]{Ra95}.

If $n=2$, then \eqref{E:bound} is equivalent to the following bound for $c(K)$ in 
terms of the moment of inertia  $I(K):=\inf_{a\in\CC}\int_K|z-a|^2\,dA(z)$:
\begin{equation}\label{E:I(K)}
c(K)\ge \Bigl(\frac{2I(K)}{\pi}\Bigr)^{1/4}.
\end{equation}
The inequality \eqref{E:I(K)} is stronger than \eqref{E:A(K)}, and is of
interest because it takes into account the dispersion of $K$.
For connected  $K$, it was proved by P\'olya and Szeg\H o in
\cite[\S5.9]{PS51}, and the general case was established
by Hansen and Nadirashvili \cite[Theorem~3]{HN92}
using a clever symmetrization argument.
For more on the relationship between capacity and moment of inertia,
see Laugesen's article \cite{La22}.

In \cite{Ra94}, the author adapted the technique of 
P\'olya--Szeg\H o to establish \eqref{E:bound} for all $n\ge1$ in the case when
$K$ is connected, and posed the problem of proving
it in the general case. Theorem~\ref{T:bound} solves this problem.

If $K$ is a closed disk of radius $r$,  then $c(K)=r$
and $\inf_{p\in \cP_n}\|p'\|^2_{L^2(K)}=\pi n r^{2n}$,
so equality holds in \eqref{E:bound}. In particular,
this shows that the constant $1/\sqrt{\pi n}$ is sharp.

However, for many sets $K$,
the lower bound in \eqref{E:bound} is far from optimal.
For example, if $K$ is a line segment of length $\ell$,
then $c(K)=\ell/4$, whereas the right-hand side of
\eqref{E:bound} is clearly equal to zero for all~$n\ge1$
since $K$ has area zero.
This example suggests that, to obtain a sharper result, 
we need to replace the logarithmic capacity $c(K)$
by a version of capacity that does not notice sets of area zero.

Such a notion already exists, namely, the 
\emph{minimal carrier capacity} of the measure $1_K\,dA$
(see \cite[Definition~1.1.1]{ST92}).
It can be defined as 
\begin{equation}\label{E:cAdef}
c_A(K):=\inf\bigl\{c(E): \text{Borel~}E\subset K,\, A(K\setminus E)=0\bigr\},
\end{equation}
where, for a bounded Borel subset $E$ of $\CC$, we write $A(E)$
for the area of $E$ and $c(E)$ for the inner capacity of $E$,
namely $c(E):=\sup\{c(F): F\subset E,\, F\text{~compact}\}$.
With this definition, it is plausible that
\eqref{E:bound} can be strengthened to
\[
c_A(K)^n\ge \frac{1}{\sqrt{\pi n}}\inf_{p\in\cP_n}\|p'\|_{L^2(K)}.
\]
Our second result shows that this is indeed true, and
that the improved inequality is asymptotically sharp.

\begin{theorem}\label{T:asymp}
Let $K$ be a compact subset of $\CC$. For $n\ge1$, define
\[
b_n(K):=\frac{1}{\sqrt{n\pi}}\inf_{p\in\cP_n}\|p'\|_{L^2(K)}.
\]
Then
\begin{equation}\label{E:asymp}
\lim_{n\to\infty}b_n(K)^{1/n}=\sup_{n\ge1}b_n(K)^{1/n}=c_A(K).
\end{equation}
\end{theorem}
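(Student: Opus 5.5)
The plan is to prove two inequalities:
\[
b_n(K)^{1/n}\le c_A(K)\quad\text{for every }n\ge1,
\qquad
\liminf_{n\to\infty}b_n(K)^{1/n}\ge c_A(K).
\]
Together they give $\limsup_n b_n^{1/n}\le\sup_n b_n^{1/n}\le c_A(K)\le\liminf_n b_n^{1/n}$, which is \eqref{E:asymp}. If $A(K)=0$, then $b_n(K)=0$ for all $n$ and $c_A(K)=0$ (take $E=\emptyset$), so the statement is trivial. Assume from now on that $A(K)>0$.

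\emph{Upper bound.} Fix $n$ and a Borel set $E\subset K$ with $A(K\setminus E)=0$. By inner regularity of area measure, choose compact sets $F_1\subset F_2\subset\cdots\subset E$ with $A(E\setminus F_j)\to0$. Then $\|p'\|_{L^2(F_j)}\uparrow\|p'\|_{L^2(K)}$ for each $p$. Writing $p'$ in terms of the coefficients of the monic polynomial $p$, the quantity $\inf_{p\in\cP_n}\|p'\|^2_{L^2(F)}$ is the minimum of a quadratic function on $\CC^{n}$. For large $j$ the associated Gram matrix is positive definite, because a nonzero polynomial cannot vanish on a set of positive area. The Gram matrices converge to that of $K$, and so the minima converge: $b_n(F_j)\to b_n(K)$. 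Theorem~\ref{T:bound} applied to $F_j$ gives $b_n(F_j)\le c(F_j)^n\le c(E)^n$, where the last step uses the definition of inner capacity. Letting $j\to\infty$ and then taking the infimum over $E$ yields $b_n(K)\le c_A(K)^n$.

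\emph{Lower bound.} If $p\in\cP_n$, then $q:=p'/n$ is monic of degree $n-1$ and $b_n(K)=\sqrt{n/\pi}\,\|q\|_{L^2(K)}$. The factor $\sqrt{n/\pi}$ disappears after taking $n$-th roots, and the shift from degree $n-1$ to $n$ is harmless. So it suffices to prove a Stahl--Totik type statement:
\[
\liminf_{n\to\infty}\Bigl(\inf\bigl\{\|q\|_{L^2(K)}: q\text{ monic},\ \deg q=n\bigr\}\Bigr)^{1/n}\ge c_A(K).
\]
Suppose not. Then there are $r<c_A(K)$, a subsequence $n_k$, and monic $q_k$ of degree $n_k$ with $\|q_k\|_{L^2(K)}\le r^{n_k}$. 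Fix $\varepsilon>0$ with $r+\varepsilon<c_A(K)$, and set $E_k=\{z\in K:|q_k(z)|>(r+\varepsilon)^{n_k}\}$. By Chebyshev's inequality, $A(E_k)\le (r/(r+\varepsilon))^{2n_k}$, which is summable. By Borel--Cantelli, the set $E:=K\setminus\limsup_k E_k$ has full area in $K$. Moreover $E=\bigcup_N F_N$, where
\[
F_N=\{z\in K: |q_k(z)|\le (r+\varepsilon)^{n_k}\ \forall k\ge N\}.
\]
Each $F_N$ is compact and the sequence is increasing. For each $F_N$, \eqref{E:Chebyshev} gives $c(F_N)\le m_{n_k}(F_N)^{1/n_k}\le r+\varepsilon$ for $k\ge N$.

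\emph{Main obstacle.} The remaining step is to pass from the bound on each $F_N$ to $c(E)\le r+\varepsilon$. This would give $c_A(K)\le r+\varepsilon$, a contradiction. The step needs continuity of inner capacity along increasing sequences of Borel sets, $c(\bigcup_N F_N)=\lim_N c(F_N)$. This is the point where I expect the most care is needed. I would derive it from the fact that logarithmic capacity, or rather the associated Choquet capacity $e^{-1/\text{energy}}$ on a bounded set, is a Choquet capacity continuous on increasing sequences; combined with capacitability of Borel sets, this gives inner capacity equal to outer capacity. A direct alternative: for compact $F\subset E$, consider the equilibrium measure $\nu$ of $F$. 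If $c(F)>r+\varepsilon$, then $\nu(F\setminus F_N)\to0$ gives some compact subset of $F_N$ whose capacity is arbitrarily close to $c(F)$, by weak-$*$ compactness and lower semicontinuity of energy. This contradicts $c(F_N)\le r+\varepsilon$.
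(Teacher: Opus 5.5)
Your proposal is correct and follows the paper's proof essentially step for step. The upper bound is Lemma~\ref{L:cAbound}: you approximate a full-area Borel set $E$ from inside by compacts and apply Theorem~\ref{T:bound}, using Gram-matrix continuity where the paper uses a norm-equivalence estimate. The lower bound is Lemma~\ref{L:cAasymp} with $\alpha=r$ and $\beta=r+\varepsilon$.

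The step you flag as the main obstacle is exactly the continuity from below of inner capacity, which the paper simply cites from \cite[Theorem~5.1.3(b)]{Ra95}. If you prove it directly, the tool in your second sketch is wrong. Restrict the equilibrium measure $\nu$ of $F$ to $F\cap F_N$ and renormalize; its energy tends to $I(\nu)$ by dominated convergence, which gives $c(F\cap F_N)\to c(F)$. Weak-$*$ compactness and lower semicontinuity of energy only give the opposite, trivial inequality.
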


We shall see that $c(K^\circ)\le c_A(K)\le c(K)$,
where $K^\circ$ denotes the interior of $K$.
Thus we obtain the following corollary,
which can be viewed as a dual
form of the classical relation \eqref{E:Chebyshev}.

\begin{corollary}\label{C:asymp}
If $c(K^\circ)=c(K)$, then
\begin{equation}\label{E:asymp2}
\lim_{n\to\infty}b_n(K)^{1/n}=\sup_{n\ge1}b_n(K)^{1/n}=c(K).
\end{equation}
\end{corollary}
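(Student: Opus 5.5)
The corollary follows from Theorem~\ref{T:asymp} once we have the chain $c(K^\circ)\le c_A(K)\le c(K)$, which the paper says it will establish. So the plan is to prove these two inequalities and then substitute into \eqref{E:asymp}.

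\emph{Upper inequality.} Take $E=K$ in the definition \eqref{E:cAdef}. Then $A(K\setminus E)=0$ trivially. The inner capacity of the compact set $K$ equals $c(K)$, since capacity is monotone and $K$ is itself a compact subset. Hence $c_A(K)\le c(K)$.

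\emph{Lower inequality.} Let $E\subset K$ be Borel with $A(K\setminus E)=0$, and let $F\subset K^\circ$ be compact. I want to show $c(E)\ge c(F)$; taking the supremum over $F$ and the infimum over $E$ then gives $c(K^\circ)\le c_A(K)$.

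The key point is that $K^\circ\setminus E$ has area zero, so $E$ has full area measure in a neighbourhood of $F$. The natural step is to use the equilibrium measure, or more directly the fact that sets of full area measure inside an open set carry enough capacity. Here is one concrete route.
\begin{enumerate}[(i)]
\item Fix $\delta>0$ with $F_\delta:=\{z:\mathrm{dist}(z,F)\le\delta\}\subset K^\circ$.
\item For small $\varepsilon>0$, choose a compact $G\subset E\cap F_\delta$ with $A(F_\delta\setminus G)<\varepsilon$. This is possible by inner regularity of Lebesgue measure, since $A(F_\delta\setminus E)=0$.
\item Show that $\liminf c(G)\ge c(F)$ as $\varepsilon\to0$.
\end{enumerate}

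Step (iii) is the main obstacle. Let $\mu$ be the equilibrium measure of $F$ and let $\mu_\delta$ be its convolution with normalized area measure on the disk of radius $\delta$. Then $\mu_\delta$ is supported in $F_\delta$, is absolutely continuous with density at most $1/(\pi\delta^2)$, and its energy tends to that of $\mu$ as $\delta\to0$ by superharmonicity of the potential.

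Now restrict $\mu_\delta$ to $G$ and renormalize. The mass lost is at most $\varepsilon/(\pi\delta^2)$, so it tends to $0$ as $\varepsilon\to0$. The energy of the lost part is also controlled, because bounded densities on small-area sets have small logarithmic energy: the potential of a density bounded by $M$ on a set of area $a$ is $O(Ma\log(1/a))$.

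The renormalized measure is a probability measure on $G$, and its energy tends to $I(\mu_\delta)$. Therefore $c(G)\ge \exp(-I(\mu_\delta)-o(1))$. Letting $\varepsilon\to0$ and then $\delta\to0$ gives $c(E)\ge\sup c(G)\ge c(F)$.

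\emph{Conclusion.} The hypothesis $c(K^\circ)=c(K)$ squeezes $c_A(K)=c(K)$. Substituting this into \eqref{E:asymp} yields \eqref{E:asymp2}.
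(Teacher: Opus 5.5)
Your proof is correct. The upper inequality $c_A(K)\le c(K)$ and the final squeeze are exactly as in the paper, but your proof of $c(K^\circ)\le c_A(K)$ takes a genuinely different route.

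\textbf{The paper's route.} It never uses equilibrium measures here. For compact $F\subset K^\circ$ and $r<\operatorname{dist}(F,\partial K)$, the sub-mean-value inequality for $|q|^2$ gives $\|q\|_{L^\infty(F)}^2\le(\pi r^2)^{-1}\int_K|q|^2\,dA$. This is combined with $c(F)\le\|q\|_{L^\infty(F)}^{1/m}$ for $q\in\cP_m$, and then Lemma~\ref{L:cAbound}, $\lambda_m(K)\le\frac{\pi}{m+1}c_A(K)^{2m+2}$, is fed in. Letting $m\to\infty$ gives $c(F)\le c_A(K)$ in a few lines. So the paper gets the lower bound for $c_A$ cheaply, as a by-product of its own least-squares inequality and ultimately of Theorem~\ref{T:bound}.

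\textbf{Your route.} Your argument is purely potential-theoretic and independent of Theorem~\ref{T:bound}. You show directly that $c(E)\ge c(F)$ for every Borel $E\subset K$ of full area. You do this by smoothing the equilibrium measure of $F$ to a density bounded by $1/(\pi\delta^2)$ on $F_\delta$, truncating it to a compact $G\subset E$, and controlling the energy lost. This yields a more intrinsic, local fact: deleting an area-null set from a neighbourhood of $F$ cannot push the capacity below $c(F)$. The cost is more technical bookkeeping.

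\textbf{Points to state explicitly when you write it out.}
\begin{enumerate}[(i)]
\item Dismiss the case $c(F)=0$ separately, since then there is no finite-energy equilibrium measure.
\item Justify $I(\mu_\delta)\to I(\mu)$ by monotone convergence. Write $I(\mu_\delta)$ as the integral against $\mu$ of a doubly disk-averaged potential of $\mu$, which increases to $U^\mu$ as $\delta\downarrow0$ by superharmonicity.
\item Since $\log(1/|z-w|)$ changes sign, bound both the cross term and the self term of the discarded piece in absolute value. Your uniform $O(Ma\log(1/a))$ bound on its potential does exactly this.
\end{enumerate}
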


The asymptotic behaviour of the quantities appearing in 
Theorem~\ref{T:asymp} and Corollary~\ref{C:asymp}
is closely related to the theory of orthogonal polynomials. Indeed, the squares of the relevant least-squares quantities are constant multiples of the squared \(L^2\)-norms of monic orthogonal polynomials with respect to planar area measure. 
General asymptotic estimates involving minimal carrier capacity were established by Stahl and Totik \cite{ST92}. For more detailed results concerning Bergman polynomials on unions of planar domains, see \cite{GPSS09} and \cite{SSST15}.

%%%%%%%%%%%%%%%%%%%%%%%%%%%%%%%%%%

\section{Proof of Theorem~\ref{T:bound}}

The proof proceeds via two lemmas.

\begin{lemma}\label{L:Greenineq}
Let $P$ be a monic polynomial of degree $d\ge1$,
let $r>0$ and let $\Omega:=\{z\in\CC:|P(z)|>r^d\}$.
Let $F$ be a holomorphic function on $\Omega$
such that, for some $a\in\CC$ and $n\ge1$,
\begin{equation}\label{E:Fatinfinity}
F(z)=az^{-n}+O(|z|^{-n-1})
\quad(|z|\to\infty).
\end{equation}
Then
\begin{equation}\label{E:Greenineq}
\int_\Omega |F'(z)|^2\,dA(z)\ge \frac{n\pi |a|^2}{r^{2n}}.
\end{equation}
\end{lemma}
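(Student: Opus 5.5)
The plan is to pull the problem back to a neighbourhood of infinity through a branch of $P^{1/d}$ and then run a Dirichlet-integral (area theorem) computation on the exterior of a disk.

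Write $\Omega_R:=\{z:r^d<|P(z)|<R^d\}$ for large $R$. By Green's formula,
\[
\int_{\Omega_R}|F'|^2\,dA=\frac{1}{2i}\oint_{\partial\Omega_R}\overline{F}\,dF ,
\]
provided the boundary is smooth. The outer piece is $|P|=R^d$, and the inner piece is $|P|=r^d$. The level set $|P|=r^d$ may contain critical points of $P$ and $F$ need not extend to it. To avoid this, we replace $r$ by $r'>r$ close to $r$ with $r'^d$ not a critical value of $|P|$ on $\Omega$. We then let $r'\downarrow r$ at the end. This is harmless, because the integral over $\Omega_{r'}$ increases to the integral over $\Omega$, and the right-hand side $n\pi|a|^2/r'^{2n}\to n\pi|a|^2/r^{2n}$.

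Rather than working with $F$ directly, it is cleaner to use subharmonicity. The key observation is that the integrand $|F'|^2$ and the region are both expressed through $P$. Consider the map $w=P(z)$, a $d$-to-$1$ branched covering of $\{|w|>r^d\}$ by $\Omega$. We average $F$ over the fibres, or more conveniently compute by Parseval along level curves.

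I would therefore proceed with the following direct computation. For $\rho>r$, set $\Gamma_\rho=\{|P|=\rho^d\}$. Near infinity there is a conformal coordinate $\zeta=\phi(z)$ with $\phi(z)^d=P(z)$ and $\phi(z)=z+O(1)$. This $\phi$ maps $\{|P|>\rho^d\}$ onto $\{|\zeta|>\rho\}$ for $\rho$ larger than all critical values. In that range $G:=F\circ\phi^{-1}$ is holomorphic in $|\zeta|>\rho$ with Laurent series $\sum_{k\ge n}c_k\zeta^{-k}$ and $c_n=a$, since $\phi^{-1}(\zeta)=\zeta+O(1)$. By conformal invariance of the Dirichlet integral,
\[
\int_{|\zeta|>\rho}|G'|^2\,dA=\pi\sum_{k\ge n}k|c_k|^2\rho^{-2k}\ge \frac{n\pi|a|^2}{\rho^{2n}}.
\]
This gives the inequality with $r$ replaced by any sufficiently large $\rho$, which is weaker than what we want.

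The main obstacle is to push $\rho$ down to $r$ past the critical values of $P$, where $\phi$ ceases to be single-valued. My plan is a monotonicity argument. Define
\[
D(\rho):=\int_{\{|P|>\rho^d\}}|F'|^2\,dA \quad\text{and}\quad M(\rho):=\frac{1}{2\pi i}\oint_{\Gamma_\rho}\overline{F}\,dF .
\]
Green's formula gives $D(\rho)=\pi M(\rho)$, up to sign convention and the vanishing of the term at infinity since $F=O(z^{-n})$. I then want to show that $\rho^{2n}D(\rho)$ is nonincreasing in $\rho$ on $(r,\infty)$. It would then be at least its limit as $\rho\to\infty$, which by the series above equals $n\pi|a|^2$. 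Differentiating using the coarea formula with $u=\log|P|/d$, which is harmonic on $\Omega$ with $|\nabla u|$ the natural density, monotonicity reduces to the inequality
\[
\oint_{\Gamma_\rho}|F'|^2\frac{ds}{|\nabla u|}\ \ge\ \frac{2n}{\rho}\,\rho\, D(\rho) .
\]
On each level curve, parametrize by the harmonic conjugate $v$ of $u$; the total change of $v$ is $2\pi$. Then the left side is $\int|\partial_vF|^2\,dv$ and $D$ is a boundary integral of $\overline F\,\partial_vF$. Cauchy--Schwarz combined with a Wirtinger-type inequality, using that $F$ has winding $n$ relative to $P^{1/d}$ (its mean vanishes suitably), should yield the required bound with constant $n$. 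This Wirtinger step on possibly disconnected level curves is where I expect the real difficulty.
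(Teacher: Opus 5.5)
\textbf{There is a genuine gap: the key level-curve inequality is left unproven, and the route you suggest for it cannot work.}

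Several parts of your set-up are correct:
\begin{itemize}
\item the regularization $r'\downarrow r$;
\item the large-$\rho$ computation via $\phi=P^{1/d}$;
\item the claim that $\rho^{2n}D(\rho)$ is nonincreasing. This follows from the paper's identity, which can be rewritten as $\rho^{2n}D(\rho)=n\pi|a|^2+\frac14\int_{\{u>\log\rho\}}\bigl(1+2n(u-\log\rho)\bigr)\,\Delta\bigl(|F|^2|P|^{2n/d}\bigr)\,dA$.
\end{itemize}
However, you have moved the whole content of the lemma into the level-curve inequality $\int_{\Gamma_\rho}|\partial_vF|^2\,dv\ge 2nD(\rho)$.

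When $\Gamma_\rho$ is connected, Riemann--Hurwitz shows that $\{|P|>\rho^d\}$ contains no critical point of $P$. So $\phi$ is conformal there and your Laurent-series argument applies. But that is just the simply-connected situation of P\'olya--Szeg\H o.

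When $\Gamma_\rho$ is disconnected, there is no Fourier expansion in $v$ on $\Gamma_\rho$. Indeed $e^{inv}$ is not even single-valued on a component enclosing $m_j$ zeros unless $d\mid nm_j$. Cauchy--Schwarz would require the Wirtinger bound $\int_{\Gamma_\rho}|\partial_vF|^2\,dv\ge n^2\int_{\Gamma_\rho}|F|^2\,dv$. This bound is true on $\Gamma_\rho$ as a whole but false on individual components. Take $P(z)=z^2-1$, $F(z)=(z-1)^{-2}$ (so $n=2$) and small $\rho$. On the component around $-1$ one has $F\approx 1/4$, while $\partial_vF=O(\rho^2)$. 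So no componentwise one-dimensional inequality can produce the constant $n$. The bound depends on $F$ extending holomorphically to all of $\{|P|>\rho^d\}$, and your argument stops exactly where the general case differs from the connected one.

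\textbf{The missing idea} is the paper's: multiply by $P^{n/d}$. You invoke ``subharmonicity'' but never say of what. The relevant function is $|F|^2|P|^{2n/d}$, which is, up to the constant $r^{2n}$, the paper's $|F|^2e^{2ng}$.

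Locally $G:=FP^{n/d}$ is holomorphic on $\Omega$, and its branches differ only by unimodular constants. Hence $|G|$, $|G'|$ and the form $\overline{G}\,dG$ are single-valued. Moreover $\Delta|G|^2=4|G'|^2$, which is the paper's formula $\Delta\rho=4e^{2ng}|F'+2ng_zF|^2$ in disguise.

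In your framework this closes the gap:
\begin{itemize}
\item on $\Gamma_\rho$ one has $\partial_vF=P^{-n/d}(\partial_vG-inG)$;
\item Stokes' theorem for $\overline{G}\,dG$ on $\{|P|>\rho^d\}$ gives $\int_{\Gamma_\rho}\mathrm{Im}(\overline{G}\,\partial_vG)\,dv=-2\int_{\{|P|>\rho^d\}}|G'|^2\,dA$;
\item substituting yields
\[
\int_{\Gamma_\rho}|\partial_vF|^2\,dv-2nD(\rho)=\rho^{-2n}\Bigl(\int_{\Gamma_\rho}|\partial_vG|^2\,dv+2n\int_{\{|P|>\rho^d\}}|G'|^2\,dA\Bigr)\ge 0 .
\]
\end{itemize}

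The paper does the same computation once, in integrated form, via Green's formula with the Green function $g$. It obtains the exact identity $\int_\Omega|F'|^2\,dA=n\pi|a|^2/r^{2n}+\frac14\int_\Omega(1+2ng)\Delta\rho\,dA$ with nonnegative remainder. This makes the coarea and monotonicity steps unnecessary.
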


\begin{proof}
We can suppose, without loss of generality, that
$F$ is holomorphic on a neighbourhood of $\overline{\Omega}$
and that $P'$ has no zero on the level set $|P|=r^d$.
Indeed, once the lemma is proved in this case, we can
apply it on $\{z:|P(z)|>r_m^d\}$ for a suitable sequence
$r_m$ decreasing to $r$, and then let $m\to\infty$.

Since $P'$ has no zero on $|P|=r^d$, the boundary of $\Omega$
consists of finitely many smooth closed curves.
If $T$ is chosen large enough so that the boundary of 
$\Omega$ is entirely contained in the disk $|z|<T$,
then $\Omega_T:=\Omega\cap\{|z|<T\}$ is admissible for Green's theorem. Thus, if $\phi,\psi$ are 
$C^\infty$ functions defined on a neighbourhood of
$\overline{\Omega}_T$, then
\begin{equation}\label{E:Green}
\int_{\Omega_T}(\psi\Delta\phi-\phi\Delta\psi)\,dA
=\int_{\partial\Omega_T}(\psi\partial_\nu\phi-\phi\partial_\nu\psi)\,ds,
\end{equation}
where $\partial_\nu$ denotes the directional derivative along the
outward unit normal from $\Omega_T$, and $ds$ is arclength
measure along $\partial\Omega_T$. We shall apply this formula
with various choices of $\phi$ and $\psi$.

Define $g$ on a neighbourhood of $\overline{\Omega}$ by
\[
g(z):=\frac{1}{d}\log|P(z)| -\log r.
\]
Then $\Delta g=0$  on a neighbourhood of $\overline{\Omega}$.
Also $g>0$ on $\Omega$ and $g=0$ on $\partial\Omega$. Further,
\begin{equation}\label{E:gatinfinity}
g(z)=\log|z|-\log r+O(|z|^{-1}) \quad(|z|\to\infty).
\end{equation}
Thus $g$ is the Green's function of $\Omega$ with pole at $\infty$.

Define
\[
\rho:=|F|^2e^{2ng}.
\]
By \eqref{E:Green} applied to suitable functions $\phi,\psi$, 
for all large enough $T$ we have
\begin{equation}\label{E:3equations}
\begin{aligned}
\int_{\Omega_T}(g\Delta\rho-\rho\Delta g)\,dA &=\int_{\partial\Omega_T}(g\partial_\nu\rho-\rho\partial_\nu g)\,ds,\\
\int_{\Omega_T}\Delta\rho\,dA&=\int_{\partial\Omega_T}\partial_\nu \rho\,ds,\\
\int_{\Omega_T} \Delta |F|^2\,dA&=\int_{\partial\Omega_T}\partial_\nu|F|^2\,ds.
\end{aligned}
\end{equation}

We now investigate what happens as $T\to\infty$. 
Using \eqref{E:Fatinfinity} and \eqref{E:gatinfinity},
it is elementary to check that, as $|z|\to\infty$,
\[
\rho(z)=|a|^2/r^{2n} +O(|z|^{-1}),
\quad
\nabla\rho(z)=O(|z|^{-2})
\quad\text{and}\quad
\Delta\rho(z)=O(|z|^{-4}).
\]
Hence, as $T\to\infty$, the left-hand sides in \eqref{E:3equations}
 simplify and converge as follows:
\begin{align*}
&\int_{\Omega_T}(g\Delta\rho-\rho\Delta g)\,dA
=\int_{\Omega_T}g\Delta\rho\,dA\to \int_{\Omega}g\Delta\rho\,dA, \\
&\int_{\Omega_T}\Delta\rho\,dA\to\int_{\Omega}\Delta\rho\,dA, \\
&\int_{\Omega_T}\Delta|F|^2\,dA=\int_{\Omega_T}4|F'|^2\,dA\to\int_{\Omega}4|F'|^2\,dA.
\end{align*}
As for the right-hand sides of \eqref{E:3equations}, the integrals over  $\partial\Omega$ can be developed as follows:
\begin{align*}
&\int_{\partial\Omega}(g\partial_\nu\rho-\rho\partial_\nu g)\,ds
=-\int_{\partial\Omega}\rho\partial_\nu g\,ds,\\
&\int_{\partial\Omega}\partial_\nu \rho\,ds=
\int_{\partial\Omega}\bigl(\partial_\nu|F|^2+2n|F|^2\partial_\nu g)\,ds
=\int_{\partial\Omega}\bigl(\partial_\nu|F|^2+2n\rho\partial_\nu g)\,ds.
\end{align*}
The integrals over the remaining part of $\partial\Omega_T$, namely the circle $|z|=T$,
behave as follows as $T\to\infty$:
\begin{align*}
&\int_{|z|=T}g\partial_\nu\rho\,ds=O(T^{-1}\log T),\\
&\int_{|z|=T}\rho\partial_\nu g\,ds=\int_{|z|=T}\frac{|a|^2}{r^{2n}}\frac{1}{T}\,ds+O(T^{-1})=\frac{2\pi |a|^2}{r^{2n}}+O(T^{-1}),\\
&\int_{|z|=T}\partial_\nu\rho\,ds=O(T^{-1}),\\
&\int_{|z|=T}\partial_\nu|F|^2\,ds=O(T^{-2n}).
\end{align*}
Therefore, letting $T\to\infty$ in \eqref{E:3equations}, we obtain the system
\begin{align*}
\int_{\Omega}g\Delta\rho\,dA &=-\int_{\partial\Omega}\rho\partial_\nu g\,ds-\frac{2\pi|a|^2}{r^{2n}}
\\
\int_{\Omega}\Delta\rho\,dA&=\int_{\partial\Omega}\bigl(\partial_\nu|F|^2+2n\rho\partial_\nu g)\,ds,\\
\int_{\Omega}4|F'|^2\,dA&=\int_{\partial\Omega}\partial_\nu|F|^2\,ds.
\end{align*}
Eliminating the boundary integrals, we deduce the following relation:
\[
\int_\Omega|F'|^2\,dA
=\frac{n\pi|a|^2}{r^{2n}}+\frac{1}{4}\int_\Omega (1+2ng)\Delta\rho\,dA.
\]
The integral on the right-hand side is non-negative, since $g\ge0$
on $\Omega$ and
\[
\Delta\rho=4e^{2ng}|F'+2ng_zF|^2\ge0,
\]
where $g_z:=(g_x-ig_y)/2$.
Therefore the inequality \eqref{E:Greenineq} holds.
\end{proof}

In the next result, we write $F_z:=(1/2)(F_x-iF_y)$
and $F_{\overline{z}}:=(1/2)(F_x+iF_y)$.

\begin{lemma}\label{L:energy}
Let $F:\CC\to\CC$ be a $C^\infty$-function such that
$F_{\overline{z}}$ has compact support
and $F(z)\to0$ as $|z|\to\infty$.
Then
\begin{equation}\label{E:energy}
\int_{\CC}|F_z|^2\,dA=\int_{\CC}|F_{\overline{z}}|^2\,dA.
\end{equation}
\end{lemma}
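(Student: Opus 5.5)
The identity will follow from integrating by parts on a large disk and checking that the boundary terms vanish, exploiting the fact that $F$ is holomorphic outside a compact set. Both sides are finite. Outside a compact set $F$ is holomorphic in a neighbourhood of $\infty$ and tends to $0$ there. Hence it has a Laurent expansion $F(z)=\sum_{k\ge1}c_kz^{-k}$ for $|z|>R$, which gives $F=O(|z|^{-1})$ and $F_z=F'=O(|z|^{-2})$. In particular $F_z\in L^2(\CC)$.

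Fix $T>R$ and let $D_T=\{|z|<T\}$. The starting point is the pointwise identity
\[
|F_z|^2-|F_{\overline z}|^2=F_z\overline{F_z}-F_{\overline z}\overline{F_{\overline z}}
=\partial_z\bigl(F\,\overline{F_z}\bigr)-\partial_{\overline z}\bigl(F\,\overline{F_{\overline z}}\bigr)
+F\bigl(\partial_{\overline z}\overline{F_{\overline z}}-\partial_z\overline{F_z}\bigr).
\]
The last bracket vanishes. Indeed, $\overline{F_{\overline z}}=(\overline F)_z$ and $\overline{F_z}=(\overline F)_{\overline z}$, so the bracket equals $(\overline F)_{z\overline z}-(\overline F)_{\overline z z}=0$ because mixed partials commute for $C^\infty$ functions. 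Thus $|F_z|^2-|F_{\overline z}|^2$ is a divergence.

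Next apply the complex form of Green's theorem on $D_T$. For smooth $u$ we have $\int_{D_T}\partial_{\overline z}u\,dA=\frac{1}{2i}\int_{|z|=T}u\,dz$ and $\int_{D_T}\partial_z u\,dA=-\frac{1}{2i}\int_{|z|=T}u\,d\overline z$. This gives
\[
\int_{D_T}\bigl(|F_z|^2-|F_{\overline z}|^2\bigr)dA=-\frac{1}{2i}\int_{|z|=T}F\,\overline{F_z}\,d\overline z-\frac{1}{2i}\int_{|z|=T}F\,\overline{F_{\overline z}}\,dz.
\]
On $|z|=T$ we have $F_{\overline z}=0$, so the second boundary term vanishes. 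The first is bounded by $2\pi T\cdot O(T^{-1})\cdot O(T^{-2})=O(T^{-2})$.

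Finally let $T\to\infty$. The left side converges to $\int_\CC|F_z|^2-\int_\CC|F_{\overline z}|^2$ by integrability, so this difference is $0$. The main point requiring care is the decay estimate at infinity, namely $F=O(1/|z|)$ and $F_z=O(1/|z|^2)$. This comes from the removable singularity at $\infty$: $F(1/w)$ is holomorphic and bounded near $w=0$ and vanishes there. Everything else is routine.
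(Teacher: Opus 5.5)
Your proof is correct and takes essentially the same route as the paper: apply Stokes/Green on the disk $|z|<T$, then show the boundary term is $O(T^{-2})$ using $F=O(|z|^{-1})$ and $F_z=O(|z|^{-2})$. The only difference is presentation. The paper writes your divergence identity as $(|F_z|^2-|F_{\overline z}|^2)\,dA=\frac{1}{2i}\,d\overline F\wedge dF=\frac{1}{2i}\,d(\overline F\,dF)$, whose boundary integral $\frac{1}{2i}\int_{|z|=T}\overline F\,dF$ is the complex conjugate of your surviving boundary term, and the fact that $d\circ d=0$ takes the place of your mixed-partials step.
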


\begin{proof}
Both the integrals converge.
Indeed, 
$F_{\overline{z}}$ has compact support,
 and since $F$ is holomorphic outside a large disk and tends to zero at infinity, we have $F_z=O(|z|^{-2})$
at $\infty$.
By Stokes' theorem, for each $T>0$, we have
\begin{equation}\label{E:Stokes}
\int_{|z|<T}(|F_z|^2-|F_{\overline{z}}|^2)\,dA(z)
=\frac{1}{2i}\int_{|z|<T}d\overline{F}\wedge dF
=\frac{1}{2i}\int_{|z|=T} \overline{F}\,dF.
\end{equation}
As $|z|\to\infty$, we have $F(z)=O(|z|^{-1})$ and $F_z,F_{\overline{z}}=O(|z|^{-2})$, so
\[
\int_{|z|=T} \overline{F}\,dF=O(T^{-2}) \quad(T\to\infty).
\]
Letting $T\to\infty$ in \eqref{E:Stokes} therefore gives \eqref{E:energy}.
\end{proof}

\begin{proof}[Proof of Theorem~\ref{T:bound}]
Fix $n\ge1$. Since
\[
\inf_{p\in\cP_n}\int_K|p'|^2\,dA=\inf_{q\in\cP_{n-1}}n^2\int_K|q|^2\,dA,
\]
it suffices to prove that
\begin{equation}\label{E:needtoprove}
\lambda:=\inf_{q\in\cP_{n-1}}\int_K|q|^2\,dA\le \frac{\pi}{n}c(K)^{2n}.
\end{equation}
If $K$ has zero area, then this is immediate. Henceforth
we assume that $K$ has positive area. Then,
by standard Hilbert-space arguments, there exists a
unique $q_0\in\cP_{n-1}$ such that $\lambda=\int_K|q_0|^2\,dA$,
and $q_0$ satisfies the orthogonality relations
\begin{equation}\label{E:orthog}
\int_K z^j\overline{q_0(z)}\,dA(z)=0 \quad(0\le j<n-1).
\end{equation}
Using \eqref{E:orthog}, we see that $\lambda$ can also be written as
\begin{equation}\label{E:lambda}
\lambda=\int_Kz^{n-1}\overline{q_0(z)}\,dA(z).
\end{equation}

Given $r>c(K)$, the relations \eqref{E:Chebyshev}
imply that there is a lemniscate $L:=\{z:|P(z)|\le r^d\}$,
where $P$ is a monic polynomial of degree $d$,
such that $K$ is contained in the interior of $L$.
Define $f_0:=1_K\overline{q}_{0}$.
Choose a non-negative radial function $\eta\in C^\infty(\CC)$
of integral $1$,
supported in a disk with centre $0$ whose radius is less than
the distance from $K$ to $\partial L$. Define $f:\CC\to\CC$ by
\[
f(z):=(f_0*\eta)(z)=\int_\CC f_0(z-\zeta)\eta(\zeta)\,dA(\zeta).
\]
Then $f\in C^\infty(\CC)$ and it has compact support
contained in $L$. 
Also, since $f$
is an average of translates of $f_0$, all of which have the same
$L^2(\CC)$-norm as $f_0$,
it follows that $\|f\|_{L^2(\CC)}\le\|f_0\|_{L^2(\CC)}$, in other words
\[
\int_\CC|f|^2\,dA\le \int_\CC|f_0|^2\,dA=\int_{K}|q_0|^2\,dA=\lambda.
\]
Further, as $\eta$ is radially symmetric, 
we have
$\int_\CC(z+\zeta)^j\eta(\zeta)\,dA(\zeta)=z^j$ for all $j\ge0$ and all $z\in\CC$, whence
\begin{equation}\label{E:moments}
\int_\CC z^j f(z)\,dA(z)=\int_K z^j\overline{q_0(z)}\,dA(z) \quad(j\ge0).
\end{equation}

We now define $F:\CC\to\CC$ by
\[
F(z):=-\frac{1}{\pi}\int_\CC\frac{f(w)}{w-z}\,dA(w).
\]
Then $F\in C^\infty(\CC)$, and 
$F_{\overline{z}}=f$ by the Cauchy--Pompeiu formula.
Since $f$ is supported in $L$, the function $F$ is 
holomorphic in $\Omega:=\CC\setminus L$. 
Also, expanding $1/(w-z)$ as a geometric series, we see that,
for $|z|$ large enough,
\[
F(z)=\frac{1}{\pi}\sum_{j\ge0}\Bigl(\int_L w^{j}f(w)\,dA(w)\Bigr)z^{-j-1},
\]
so, from \eqref{E:orthog}, \eqref{E:lambda} and \eqref{E:moments},
we deduce that
\[
F(z)=\frac{\lambda}{\pi} z^{-n} + O(|z|^{-n-1}) \quad (|z|\to\infty).
\]
Lemma~\ref{L:Greenineq} therefore applies, and we deduce that
\begin{equation}\label{E:ineq1}
\int_\Omega |F'(z)|^2\,dA(z)\ge \frac{n\lambda^2}{\pi r^{2n}}.
\end{equation}
Also, from Lemma~\ref{L:energy} and the identity $F_{\overline{z}}=f$, we get
\begin{equation}\label{E:ineq2}
\int_{\Omega}|F'|^2\,dA
\le \int_\CC|F_z|^2\,dA
=\int_\CC|F_{\overline{z}}|^2\,dA
=\int_\CC|f|^2\,dA
\le \lambda.
\end{equation}
Combining \eqref{E:ineq1} and \eqref{E:ineq2}, we obtain
the inequality $\lambda\ge n\lambda^2/\pi r^{2n}$, whence
\[
\lambda\le \pi r^{2n}/n.
\]
Finally, as this holds for each $r>c(K)$, we conclude that
\[
\lambda\le \pi c(K)^{2n}/n,
\]
as was to be proved.
\end{proof}

%%%%%%%%%%%%%%%%%%%%%%%%%%%%%%%%%%

\section{Proofs of Theorem~\ref{T:asymp} and Corollary~\ref{C:asymp}}

The proof of  Theorem~\ref{T:asymp} proceeds via
two lemmas.

\begin{lemma}\label{L:cAbound}
Let $K$ be a compact subset of $\CC$. For  $m\ge0$, set
\begin{equation}\label{E:lambda(K)def}
\lambda_m(K):=\inf_{q\in\cP_m}\int_K|q|^2\,dA.
\end{equation}
Then
\begin{equation}\label{E:cAbound}
\lambda_m(K)\le\frac{\pi}{m+1}c_A(K)^{2m+2}\quad(m\ge0).
\end{equation}
\end{lemma}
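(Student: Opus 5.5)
The plan is to reduce Lemma~\ref{L:cAbound} to Theorem~\ref{T:bound}, or more precisely to inequality \eqref{E:needtoprove} from its proof, which says $\lambda_m(K')\le \frac{\pi}{m+1}c(K')^{2m+2}$ for every compact $K'$ (with $n=m+1$). The key observation is that $\lambda_m$ only sees $K$ up to sets of area zero and is monotone under inclusion. Fix a Borel set $E\subset K$ with $A(K\setminus E)=0$. Then for every $q\in\cP_m$ we have $\int_K|q|^2\,dA=\int_E|q|^2\,dA$, so it suffices to show
\[
\inf_{q\in\cP_m}\int_E|q|^2\,dA\le \frac{\pi}{m+1}c(E)^{2m+2},
\]
where $c(E)$ is the inner capacity. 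Taking the infimum over such $E$ in \eqref{E:cAdef} then yields \eqref{E:cAbound}.

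To pass from $E$ to compact sets, I use inner regularity of Lebesgue measure. Choose compact sets $F_k\subset E$ with $A(E\setminus F_k)\to0$; replacing $F_k$ by $F_1\cup\dots\cup F_k$, we may assume they are increasing. For any fixed $q\in\cP_m$, the function $|q|^2$ is bounded on $K$, so
\[
\int_{F_k}|q|^2\,dA\to\int_E|q|^2\,dA.
\]
By \eqref{E:needtoprove} applied to $F_k$, together with $c(F_k)\le c(E)$ (which holds by the definition of inner capacity), we get
\[
\lambda_m(F_k)\le\frac{\pi}{m+1}c(F_k)^{2m+2}\le \frac{\pi}{m+1}c(E)^{2m+2}.
\]

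The main obstacle is the order of limits. We need $\lambda_m(E)\le\liminf_k\lambda_m(F_k)$, but $\lambda_m(F_k)\le\lambda_m(E)$ goes the wrong way, so the infimum over $q$ must be controlled uniformly. I would handle this by a compactness argument on the minimizers. If $A(E)=0$ then $\lambda_m(K)=0$ and there is nothing to prove, so assume $A(E)>0$. For large $k$ we then have $A(F_k)\ge A(E)/2$, and the minimizers $q_k\in\cP_m$ for $F_k$ satisfy
\[
\int_{F_k}|q_k|^2\,dA\le\int_{F_k}|z|^{2m}\,dA\le C.
\]
Since $F_k$ increases, $F_k\supset F_{k_0}$ for a fixed $k_0$ with $A(F_{k_0})>0$. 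On $F_{k_0}$ the $L^2$ norm is a genuine norm on polynomials of degree $\le m$, so the coefficients of $q_k$ are bounded. Passing to a subsequence, $q_k\to q_\ast\in\cP_m$ uniformly on $K$. Then
\[
\int_E|q_\ast|^2\,dA=\lim_k\int_{F_k}|q_k|^2\,dA\le \frac{\pi}{m+1}c(E)^{2m+2},
\]
because $\int_{F_k}|q_k|^2-\int_{F_k}|q_\ast|^2\to0$ by uniform convergence and $\int_{F_k}|q_\ast|^2\to\int_E|q_\ast|^2$. This gives the bound for $E$ and hence the lemma.

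The case $m=0$ is included with no changes, since $\cP_0=\{1\}$ and the inequality is then just \eqref{E:A(K)} applied to $E$.
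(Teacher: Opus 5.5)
Your proof is correct and follows the same route as the paper. Both arguments apply the compact-set bound from Theorem~\ref{T:bound} to compact sets $F_k\subset E$ exhausting $E$ in area, use $c(F_k)\le c(E)$, and pass to the limit $\lambda_m(F_k)\to\lambda_m(K)$. The only difference is how that limit is justified: the paper uses the norm equivalence $\|q\|_{L^\infty(K)}\le M\|q\|_{L^2(K)}$ on polynomials of degree at most $m$ to get the uniform estimate $\bigl(1-M^2A(K\setminus F)\bigr)\lambda_m(K)\le\lambda_m(F)$ for every compact $F\subset K$, with no nesting needed, whereas you extract a convergent subsequence of minimizers from nested $F_k$, which is equally valid.
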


\begin{proof}
Fix $m\ge0$.
From Theorem~\ref{T:bound}, we have
\begin{equation}\label{E:cbound}
\lambda_m(K)\le\frac{\pi}{m+1}c(K)^{2m+2}.
\end{equation}
Our goal is to replace $c(K)$ by $c_A(K)$ in the right-hand side.
This is obvious if $A(K)=0$, since then $\lambda_m(K)=0$.
Henceforth we assume that $A(K)>0$.

As $A(K)>0$,  both $\|\cdot\|_{L^2(K)}$ and $\|\cdot\|_{L^\infty(K)}$
are genuine norms on the space of polynomials of degree at most~$m$.
By equivalence of norms on finite-dimensional spaces,
there exists a constant $M=M(m,K)$ such that
\[
\|q\|_{L^\infty(K)}\le M\|q\|_{L^2(K)}
\quad(\deg q\le m).
\]
Hence, if $F$ is a compact subset of $K$ and $\deg q\le m$, then
\begin{align*}
\int_F|q|^2\,dA 
&=\int_K|q|^2\,dA-\int_{K\setminus F}|q|^2\,dA\\
&\ge\int_K|q|^2\,dA-\|q\|_{L^\infty(K)}^2A(K\setminus F)\\
&\ge\int_K|q|^2\,dA-M^2\|q\|_{L^2(K)}^2A(K\setminus F)\\
&=(1-M^2A(K\setminus F))\int_K|q|^2\,dA.
\end{align*}
Taking the infimum
over $q\in\cP_m$ therefore gives that, for all compact $F\subset K$,
\begin{equation}\label{E:lambdaineq}
\bigl(1-M^2A(K\setminus F)\bigr)\lambda_m(K)\le\lambda_m(F)\le\lambda_m(K).
\end{equation}

Let $E$ be a Borel subset of $K$ such that $A(K\setminus E)=0$.
By inner regularity of area measure, there exist compact
sets $F_j\subset E$ such that $A(K\setminus F_j)\to0$.
From \eqref{E:lambdaineq}, it follows that $\lambda_m(F_j)\to\lambda_m(K)$.
Now, applying \eqref{E:cbound} to $F_j$ gives
\[
\lambda_m(F_j)\le \frac{\pi}{m+1}c(F_j)^{2m+2}\le \frac{\pi}{m+1}c(E)^{2m+2}.
\]
Letting $j\to\infty$, we deduce that
\[
\lambda_m(K)\le\frac{\pi}{m+1}c(E)^{2m+2}.
\]
Taking the infimum over all Borel $E\subset K$
with $A(K\setminus E)=0$, we obtain \eqref{E:cAbound}.
\end{proof}

\begin{lemma}\label{L:cAasymp}
Let $K$ and $\lambda_m(K)$ be as in Lemma~\ref{L:cAbound}.
Then
\[ 
\liminf_{m\to\infty}\lambda_m(K)^{1/(2m)}\ge c_A(K).
\]
\end{lemma}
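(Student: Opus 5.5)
The plan is to argue by contradiction, using the minimizing polynomials and a Borel--Cantelli argument. Fix $t<s<c_A(K)$; if $c_A(K)=0$ there is nothing to prove. Suppose that $\lambda_m(K)\le t^{2m}$ for all $m$ in an infinite set $\Lambda\subset\mathbb{N}$. We will show that this forces $c_A(K)\le s$, a contradiction. Since $t$ is arbitrary, this gives $\liminf_m\lambda_m(K)^{1/(2m)}\ge c_A(K)$. We may assume $A(K)>0$, since otherwise $c_A(K)=0$, the empty set being admissible in \eqref{E:cAdef}.

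For each $m\in\Lambda$, pick $q_m\in\cP_m$ attaining (or nearly attaining, say within a factor $2$) the infimum $\lambda_m(K)$. Set
\[
E_m:=\{z\in K:\ |q_m(z)|\le s^m\}.
\]
Each $E_m$ is a compact subset of $K$. By Chebyshev's inequality,
\[
A(K\setminus E_m)\le s^{-2m}\int_K|q_m|^2\,dA\le 2\,(t/s)^{2m},
\]
and the right-hand side is summable over $m$. Let $G_N:=\bigcap_{m\in\Lambda,\,m\ge N}E_m$, which is compact and increasing in $N$, and let $E:=\bigcup_N G_N$. By Borel--Cantelli, $A(K\setminus E)=0$, so $E$ is a Borel subset of $K$ admissible in \eqref{E:cAdef}, and therefore $c(E)\ge c_A(K)>s$.

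Next we transfer this capacity bound to a single compact set $G_N$. The key fact needed is that inner capacity is continuous along increasing sequences of Borel sets, so that $c(G_N)\to c(E)$. This is the standard continuity property of capacity (see \cite[Chapter~5]{Ra95}, where it follows from capacitability and the behaviour of equilibrium energies under increasing limits). Equivalently, one can argue directly: a compact $F\subset E$ with $c(F)>s$ satisfies $c(F\cap G_N)\to c(F)$. I expect this to be the main technical point; the rest is elementary. Granting it, choose $N$ with $c(G_N)>s$. For every $m\in\Lambda$ with $m\ge N$ we have $\|q_m\|_{L^\infty(G_N)}\le s^m$, so $m_m(G_N)\le s^m$. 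By \eqref{E:Chebyshev}, $c(G_N)=\inf_m m_m(G_N)^{1/m}\le s$. This contradicts $c(G_N)>s$ and completes the proof.
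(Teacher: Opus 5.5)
Your proof is correct and follows essentially the same route as the paper. It uses the same steps: sublevel sets $\{|q_m|\le s^m\}$ of (near-)minimizing polynomials, a summable bound on the area of their complements in $K$, the full-area Borel set $E=\bigcup_N G_N$, and continuity from below of inner capacity (the paper cites \cite[Theorem~5.1.3(b)]{Ra95}) together with \eqref{E:Chebyshev} to reach a contradiction. The only cosmetic difference is that you apply the Chebyshev bound directly to the intersections $G_N$, whereas the paper bounds each sublevel set separately and then uses monotonicity.
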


\begin{proof}
Suppose, to obtain a contradiction, that
$\liminf_{m\to\infty}\lambda_m(K)^{1/(2m)}< c_A(K)$.
Choose $\alpha,\beta$ such that
$\liminf_{m\to\infty}\lambda_m(K)^{1/(2m)}<\alpha<\beta< c_A(K)$.
Then there exist a strictly increasing sequence
of positive integers $(m_j)_{j\ge1}$ and polynomials $q_j\in\cP_{m_j}$
such that, for all $j\ge1$,
\begin{equation}\label{E:alpha}
\int_K|q_j|^2\,dA\le \alpha^{2m_j}.
\end{equation}
For each $j\ge1$, define
\[
F_j:=\{z\in K:|q_j(z)|\le\beta^{m_j}\}.
\]
Then
\[
\int_K|q_j|^2\,dA
\ge\int_{K\setminus F_j}|q_j|^2\,dA
\ge\int_{K\setminus F_j}\beta^{2m_j}\,dA
=\beta^{2m_j}A(K\setminus F_j).
\]
Combining with \eqref{E:alpha}, we deduce that
\[
A(K\setminus F_j)\le (\alpha/\beta)^{2m_j}.
\]
Now define
\[ 
E_k:=\bigcap_{j\ge k}F_j 
\quad\text{and}\quad
E:=\bigcup_{k\ge1}E_k.
\]
The sets $E_k$ are compact and increase with $k$. 
Moreover
\[
A(K\setminus E_k)\le \sum_{j\ge k}A(K\setminus F_j)
\le \sum_{j\ge k}(\alpha/\beta)^{2m_j}\to 0 \quad(k\to\infty).
\]
Thus $E$ is a Borel subset  of $K$ and $A(K\setminus E)=0$,
whence
\[
c_A(K)\le c(E).
\]
On the other hand, using \eqref{E:Chebyshev}, we have,
for each $j\ge1$,
\[
c(F_j)\le \|q_j\|_{L^\infty(F_j)}^{1/m_j}\le \beta,
\]
so, using the continuity from below of $c(\cdot)$ for bounded Borel sets (see e.g.\ \cite[Theorem~5.1.3(b)]{Ra95}),
we get
\[
c(E)=\lim_{k\to\infty} c(E_k)\le \liminf_{k\to\infty}c(F_k)\le\beta.
\]
Combining these observations, we deduce that $c_A(K)\le \beta$,
which contradicts our choice of $\beta$.
\end{proof}

\begin{proof}[Proof of Theorem~\ref{T:asymp}]
For each $n\ge1$,
\[
b_n(K)
=\frac{1}{\sqrt{n\pi}}\inf_{p\in \cP_n}\|p'\|_{L^2(K)}
=\sqrt{\frac{n}{\pi}}\inf_{q\in\cP_{n-1}}\|q\|_{L^2(K)}
=\sqrt{\frac{n}{\pi}}\lambda_{n-1}(K)^{1/2},
\]
where $\lambda_{n-1}(K)$ is given by \eqref{E:lambda(K)def}.
Hence, by Lemma~\ref{L:cAbound},
\[
b_n(K)^{1/n}\le c_A(K) \quad(n\ge1),
\]
and by Lemma~\ref{L:cAasymp},
\[
\liminf_{n\to\infty}b_n(K)^{1/n}
=\liminf_{n\to\infty}\Bigl(\sqrt{\frac{n}{\pi}}\lambda_{n-1}(K)^{1/2}\Bigr)^{1/n}
=\liminf_{m\to\infty}\lambda_{m}(K)^{1/(2m)}
\ge c_A(K).
\]
Combining these observations, we deduce that
\[
c_A(K)\le \liminf_{n\to\infty}b_n(K)^{1/n}\le \sup_{n\ge1}b_n(K)^{1/n}\le c_A(K),
\]
from which \eqref{E:asymp} follows.
\end{proof}

It remains to prove Corollary~\ref{C:asymp}. 
As remarked in the introduction, this is an immediate consequence
of Theorem~\ref{T:asymp} once the following lemma has been 
established.

\begin{lemma}\label{L:interior}
Let $K$ be a compact subset of $\CC$. Then 
\begin{equation}\label{E:interior}
c(K^\circ)\le c_A(K)\le c(K).
\end{equation}
\end{lemma}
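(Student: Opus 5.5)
The right-hand inequality $c_A(K)\le c(K)$ is immediate from the definition \eqref{E:cAdef}: take $E=K$ itself, which is Borel with $A(K\setminus E)=0$. Since $K$ is compact, its inner capacity coincides with its capacity $c(K)$, so the infimum defining $c_A(K)$ is at most $c(K)$.

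For the left-hand inequality, I will fix a Borel set $E\subset K$ with $A(K\setminus E)=0$ and show that $c(K^\circ)\le c(E)$. Taking the infimum over such $E$ then gives $c(K^\circ)\le c_A(K)$. Since $c(K^\circ)$ is an inner capacity, it suffices to show that $c(F)\le c(E)$ for every compact $F\subset K^\circ$.

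So fix a compact $F\subset K^\circ$ and let $U\subset K^\circ$ be an open set with finitely many components whose closure $\overline U$ is compact and contained in $K^\circ$, such that $F\subset U$. The key observation is that $E\cap U$ has full area measure in $U$, and hence is dense in $U$. I then use the Chebyshev characterization \eqref{E:Chebyshev} in the following way.

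\begin{itemize}
\item Let $G\subset E\cap U$ be any compact subset. Every monic polynomial $p$ satisfies $\|p\|_{L^\infty(G)}\ge m_n(G)$.
\item I will instead argue via harmonic measure or Green's functions. Let $E'\subset E\cap U$ be compact with $A(U\setminus E')$ small. The aim is to show that $c(E')$ is close to $c(\overline U)$, which is at least $c(F)$.
\end{itemize}

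The cleanest route seems to be the following. Capacity is invariant under removing polar sets, and is monotone. Moreover, a compact set whose complement inside $\overline U$ has small area should have capacity close to $c(\overline U)$.

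This is where I expect the main obstacle to lie, because small area does not control capacity in general. I will therefore avoid it with a different tool: the outer-measure-type lower bound coming from Lemma~\ref{L:cAasymp}.

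Apply Lemma~\ref{L:cAasymp}, or rather its proof, to a closed disk $D\subset K^\circ$. For a disk, $\lambda_m(D)$ is computed explicitly as $\pi r^{2m+2}/(m+1)$, so $c_A(D)\ge r=c(D)$. More generally, I will use Theorem~\ref{T:asymp}: for any compact $L$ one has $c_A(L)=\lim b_n(L)^{1/n}$, and $b_n$ depends only on $1_L\,dA$.

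Now approximate $F$ from outside by a compact set $L\subset K^\circ$ that is a finite union of closed disks (or squares) covering $F$. Then $c_A(L)\le c_A(K)$ by monotonicity of $c_A$, since a full-measure subset of $K$ intersected with $L$ is a full-measure subset of $L$. It therefore remains to show $c(L)\le c_A(L)$ for such nice $L$.

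For this last step I will use Lemma~\ref{L:cAasymp}: it gives $c_A(L)\ge\limsup$ of the area-Chebyshev quantities. I will then compare these with $c(L)$ via a Bernstein--Walsh / Nikolskii-type inequality on sets satisfying an interior cone condition:
\[
\|q\|_{L^\infty(L)}\le C\,m^{2}\,\|q\|_{L^2(L)}\qquad(\deg q\le m).
\]
This inequality gives $\lambda_m(L)^{1/(2m)}\ge\bigl(m_m(L)/(Cm^2)\bigr)^{1/m}\to c(L)$, and hence $c_A(L)\ge c(L)\ge c(F)$.

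The Nikolskii inequality for finite unions of disks is the step I expect to need care. To prove it, I will apply the sub-mean-value property on disks of radius comparable to $1/m^2$, combined with the Markov inequality, to control $|q|$ at points near the boundary.
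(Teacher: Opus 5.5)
Your final argument is correct, but it is a detour compared with the paper. You insert an intermediate finite union of closed disks $L$ with $F\subset L\subset K^\circ$, use monotonicity $c_A(L)\le c_A(K)$, and then prove $c(L)\le c_A(L)$. For that you pair $\limsup_m\lambda_m(L)^{1/(2m)}\le c_A(L)$ with a Nikolskii inequality $\|q\|_{L^\infty(L)}\le Cm^2\|q\|_{L^2(L)}$. Because you ask for sup-norm control on all of $L$, right up to $\partial L$, you need a Markov/Bernstein inequality and shrinking disks. Your sketch does work, and Bernstein's inequality on each constituent disk even gives $Cm$.

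The paper avoids the boundary altogether. In $c(F)\le\|q\|_{L^\infty(F)}^{1/m}$ only the sup over $F$ matters, and $F$ lies at distance $>r$ from $\partial K$. So the sub-mean-value inequality on $D(z,r)\subset K$ gives $\|q\|_{L^\infty(F)}^2\le(\pi r^2)^{-1}\int_K|q|^2\,dA$, with a constant independent of $m$. Lemma~\ref{L:cAbound} applied directly to $K$ then finishes. There is no intermediate set, no monotonicity of $c_A$ and no Markov inequality. Your route does give a by-product: combined with Theorem~\ref{T:bound}, it proves directly that $\lambda_m(L)^{1/(2m)}\to c(L)$ for sets with a uniform interior-disk condition. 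For this lemma, though, it is extra work.

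One correction: you repeatedly attribute the needed inequality to Lemma~\ref{L:cAasymp}. That lemma gives $\liminf_m\lambda_m(K)^{1/(2m)}\ge c_A(K)$, which is the wrong direction for you. Both $c_A(L)\ge\limsup_m\lambda_m(L)^{1/(2m)}$ and your disk computation $c_A(D)\ge r$ come from Lemma~\ref{L:cAbound}. Equivalently, they come from the half $\sup_n b_n(K)^{1/n}\le c_A(K)$ of Theorem~\ref{T:asymp}, which is proved without the present lemma, so there is no circularity. You should also delete the abandoned false starts (the set $U$ and the harmonic-measure bullet).
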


\begin{proof}
The right-hand inequality in \eqref{E:interior} is obvious, by taking
$E=K$ in \eqref{E:cAdef}. So we concentrate on the left-hand inequality. This inequality is clear if $K^\circ=\emptyset$, so, in what follows, we may as well assume that $K^\circ\ne\emptyset$.

Let $F$ be a non-empty, compact subset of $K^\circ$.
Choose $r>0$ strictly smaller than the distance between $F$ 
and $\partial K$. Thus the closed disk $D(z,r)$ is contained in $K^\circ$ for every $z\in F$. By the mean-value inequality,
every polynomial $q$ satisfies
\[
|q(z)|^2\le \frac{1}{\pi r^2}\int_{D(z,r)}|q|^2\,dA\le \frac{1}{\pi r^2}\int_K|q|^2\,dA
\quad(z\in F).
\]
If $m\ge 1$ and $q\in\cP_m$, then by \eqref{E:Chebyshev}
we have $c(F)\le \|q\|_{L^\infty(F)}^{1/m}$, and hence
\[
c(F)\le \Bigl(\frac{1}{\pi r^2}\int_K|q|^2\,dA\Bigr)^{1/(2m)}.
\]
Taking the infimum over $q\in\cP_m$, 
and then applying Lemma~\ref{L:cAbound}, 
we deduce that
\[
c(F)\le  \Bigl(\frac{1}{(m+1)r^2}c_A(K)^{2m+2}\Bigr)^{1/(2m)} 
\quad(m\ge1).
\]
Letting $m\to\infty$, we obtain
\[
c(F)\le c_A(K).
\]
Finally, taking the supremum over all non-empty compact subsets $F$ of $K^\circ$, we get 
\[
c(K^\circ)\le c_A(K)\qedhere
\]
\end{proof}

%%%%%%%%%%%%%%%%%%%%%%%%%%%%%%%%%%

\section*{AI tools declaration}
In preparing this work, the author used ChatGPT to develop and explore strategies for the proofs of Theorems~\ref{T:bound}
and \ref{T:asymp} and Corollary~\ref{C:asymp}. 
The final mathematical  text was written entirely by the author, who thoroughly corrected, 
verified, and streamlined the proposed strategy. 
The author accepts full responsibility for the accuracy and integrity of the final results presented in this paper.

\bibliographystyle{plain}
\bibliography{bibfile}

\end{document}